\title{Bombieri-type theorem for convolution of arithmetic functions on Number field}
\author{Pranendu Darbar}
\address{Institute of Mathematical Sciences\\ CIT Campus, Taramani, Chennai 600113, India and Homi Bhabha National Institute, Training School Complex, Anushakti Nagar, Mumbai 400094, India}
\email[Pranendu Darbar]{dpranendu@imsc.res.in}
\author{Anirban Mukhopadhyay}
\address{Institute of Mathematical Sciences\\ CIT Campus, Taramani, Chennai 600113, India and Homi Bhabha National Institute, Training School Complex, Anushakti Nagar, Mumbai 400094, India}
\email[Anirban Mukhopadhyay]{anirban@imsc.res.in}
\newtheorem{theorem}{Theorem}[section]
\newtheorem{lemma}[theorem]{Lemma}
\newtheorem{corollary}[theorem]{Corollary}
\newtheorem*{theorem*}{Theorem}
\theoremstyle{remark}\newtheorem*{remark}{Remark}
\numberwithin{equation}{section}
\renewcommand{\pmod}[1]{\left(\mathrm{mod}\,#1\right)}
\renewcommand{\phi}{\varphi}
\newcommand{\fa}{\mathfrak a}
\newcommand{\fu}{\mathfrak u}
\newcommand{\fp}{\mathfrak p}
\newcommand{\fq}{\mathfrak q}
\newcommand{\OK}{\mathcal{O}_K}
\begin{document}

\begin{abstract}
 Let $K$ be an imaginary quadratic number field of class number one and $\mathcal{O}_K$ be its ring of integers. We show that, if the arithmetic functions $f, g:\mathcal{O}_K\rightarrow \mathbb{C}$ both have level of distribution $\vartheta$ for some $0<\vartheta\leq 1/2$ then the Dirichlet convolution $f*g$ also have level of distribution $\vartheta$.   
\end{abstract} 

\maketitle

\section{Introduction and statements of results}
Let $\Lambda(n)$ be the usual Van-Mangoldt function. For $x>1$  Siegel-Walfisz theorem states that for any $D>0$ 
\[
\sum_{n\leq x}\chi(n)\Lambda(n)=O\left(\frac{x}{(\log x)^D}\right)
\] 
for any non-principal character $\chi\pmod q$ if $q\ll (\log x)^{3D}$.

An arithmetic function $f$ is said to have {\it level of distribution} $\vartheta$ for $0<\vartheta\le 1$ if for any $A>0$
there exists a constant $B=B(A)$ such that
\begin{equation} \label{eq:1.17}
\sum_{q\leq \frac{N^{\vartheta}}{(\log N)^{C}}}\max_{M\leq N} \max_{\substack{a\\ (a,q)= 1}} 
\left|  \sum_{\substack{n\leq M \\ n \equiv a \pmod q}} f(n) -\frac{1}{\varphi(q)}\sum_{\substack{n\leq M \\ (n, q)=1}}f(n)\right|
   \ll_{A} \, \frac{N}{(\log N)^A}.
\end{equation}
 The Bombieri-Vinogradov theorem states indicator function of primes have level of distribution $\vartheta$ for any $\vartheta \leq 1/2$ and the Elliott-Halberstam conjecture predicts the level of distribution to be $1.$
 
A complex valued arithmetic function $f$ is said to satisfy Siegel-Walfisz condition if there exist positive constants $C, D$ such that
\begin{align}\label{siegel walfisz}
f(n)=O\left(\tau(n)^C\right) \quad 
\text{and} \quad \sum_{n\leq x}f(n)\chi(n)=O\left(\frac{x}{(\log x)^D}\right),
\end{align}
for any non-principal Dirichlet character $\chi\pmod q$ where $\fq$ is an ideal of $\OK$ of norm $q\ll (\log x)^{3D}.$ 

If arithmetic function $f$ and $g$ both satisfies \eqref{siegel walfisz} condition and have level of distribution $1/2$ then Motohashi \cite{MOT} obtained that the Dirichlet convolution $f*g$ does so.

In this article, we extend Motohashi's \cite{MOT} result to arithmetic functions on imaginary quadratic number fields of class number one.

Let $K$ be a number field of degree $d$, class number one with $r_1$ real and $r_2$ non-conjugate complex embeddings and $\OK$ be its ring of integers.
An element $w\in \mathcal{O}_K$ is said to be a prime number in $K$, if the principal ideal $w\mathcal{O}_K$ is a prime ideal.
Let $\mathcal P$ be the set of prime numbers in $K$.

Now we first introduce the notion of Siegel-Walfisz condition and level of distribution in number field.

For $Y'\geq 1, Y\geq 0$ and $N>1,$ let $A_b^{0}(Y', Y, N)$ be the set of $\xi\in \mathcal{O}_K$ which satisfies $Y'\leq \sigma(\xi)\leq Y+N^b$ for all real embeddings and $Y'\leq |\sigma(\xi)|\leq Y+N^b$ for all complex embeddings of $K.$  We also define $A^0(Y', N+Y')=A_{1}^{0}(Y', Y, N)$ and $A^0(N)=A^0_1(1, 0, N).$ 

A complex valued arithmetic function $f:\mathcal{O}_K\rightarrow \mathbb{C}$ is said to satisfy Siegel-Walfisz condition if there exist positive constants $C, D$ such that
\begin{align*}
\tag{S-W}\label{SW condition number field} f(\fa)=O\left(\tau(\fa)^C\right) \quad 
\text{and} \quad \sum_{\fa\in A^{0}(N)}f(\fa)\chi(\fa)=O\left(\frac{|A^{0}(N)|}{(\log N)^{3D}}\right),
\end{align*}
for any non-principal Dirichlet character $\chi\pmod \fq$ with $|\fq|\ll (\log N)^D.$

An arithmetic function $f:\mathcal{O}_K\rightarrow \mathbb{C}$ is said to have {\it level of distribution} $\vartheta$ for $0<\vartheta\le 1$ if for any $A>0$
there exists a constant $B=B(A)$ such that if $Q=\frac{|A^0(N)|^{\vartheta}}{(\log N)^B}$ then
  \begin{align}\label{LOD on number field}
  \sum_{|\mathfrak{q}|\le Q}\max_{M\leq N}\max_{(\gamma, \fq)=1}|\varepsilon(M;\fq,\fa;f)| \ll_{A,K} |A^{0}(N)| (\log N)^{-A},
  \end{align}
where \[\varepsilon(M;\fq,\fa;f) = 
 \sum_{\substack{ \fa\in A^0(M)  \\ \fa \equiv \gamma \pmod {\fq}}} f(\fa) -
   \frac{1}{\phi(\fq)} \sum_{\substack{ \fa\in A^0(M) \\ (\fa,\fq)=1} } f(\fa).\] 

An analogue of Elliott-Halberstam conjecture for number fields predicts that the prime element in $\OK$ have level of distribution $\vartheta$ with 
 any $\vartheta$ in $0<\vartheta \le 1$.
Hinz \cite{HIN1} showed that primes have level of distribution $1/2$  in totally real algebraic number fields
and have level of distribution $2/5$ in imaginary quadratic fields. 
Huxley \cite{ HUX} obtained level of distribution $1/2$ for an weighted version of 
\eqref{LOD on number field}.

\begin{remark}
Method applied in this paper relies on the equality $|\sigma(w)|=|w|^{1/2}$ for each $w\in \OK$ and embeddings $\sigma:K\rightarrow \mathbb{C}$ where $|w|$ denoted the norm of $w$. In general for a number field of degree $d>1,$ a lemma of Siegel \cite{SIE} gives the existance of two positive constants  $c_{1}$ and $c_{2}$ depending only on $K$
 with $c_1c_2=1$ and a unit $\epsilon$ of $K$ such that the inequalities 
\begin{align*}
c_{1}|\alpha|^{1/d}\leq |\sigma{(\alpha)}\sigma{(\epsilon)}|\leq c_{2}|\alpha|^{1/d} 
\end{align*}
holds for all $\alpha\in \OK$ and all embeddings $\sigma$ of $K$.
Now $c_1=c_2=1$ implies that all embeddings give equivalent norms. 
This is possible only in imaginary quadratic number fields.
\end{remark}

The following theorem is a number field version of a general result by Motohashi \cite{MOT}.

The main theorems of this paper are as follows.
\begin{theorem}\label{thm:Motohashi number field version}
Let $K$ be an imaginary quadratic field of class number one and $\zeta_{0}$ be a generator of the group of  roots of unity. 
Let $f$ and $g$ be complex valued arithmetic functions on $\mathcal{O}_K$ satisfying $f(\zeta_{0}^r\fa)=f(\fa), g(\zeta_{0}^r\fa)=g(\fa)$  for all $\fa\in \OK$ and positive integer $r$.
If $f$ and $g$ both satisfies \eqref{SW condition number field}  and have common level of distribution $1/2$  then their  Dirichlet convolution $f*g$ also satisfies \eqref{SW condition number field}  and have common level of distribution $1/2$ .
\end{theorem}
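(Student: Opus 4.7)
The plan is to follow Motohashi's bilinear-form strategy, carried out with Dirichlet characters modulo ideals in $\OK$. The Siegel-Walfisz part of the conclusion is itself an instance of the bilinear analysis at small conductors and will fall out of the same argument, so the main work is the level-of-distribution bound. For $(\gamma,\fq)=1$, orthogonality on $(\OK/\fq)^{*}$ gives
\[
 \varepsilon(M;\fq,\gamma;f*g) \;=\; \frac{1}{\phi(\fq)}\sum_{\chi\neq \chi_{0}}\bar\chi(\gamma)\sum_{\fa\in A^{0}(M)}(f*g)(\fa)\chi(\fa),
\]
and since $\chi$ is completely multiplicative on ideals coprime to $\fq$, the inner sum factors as
\[
 \sum_{\substack{\fb,\fc\\ \fb\fc\in A^{0}(M)}} f(\fb)\chi(\fb)\,g(\fc)\chi(\fc).
\]
The root-of-unity invariance of $f,g$ makes these well-defined on ideals, and the imaginary quadratic identity $|\sigma(\fb\fc)|=|\sigma(\fb)||\sigma(\fc)|$ lets the archimedean cutoff $\fb\fc\in A^{0}(M)$ be expressed cleanly as $|\fb|\cdot|\fc|\le M$.

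Next I split this bilinear form into three ranges with a parameter $U\asymp|A^{0}(N)|^{1/2}/(\log N)^{B_{0}}$ for a suitable $B_{0}$: range (i) is $|\fb|\le U$, range (ii) is $|\fc|\le U$, and range (iii) is $|\fb|,|\fc|>U$. In range (i), fixing $\fb$ and reinstating the principal character turns the inner sum into a genuine discrepancy for $g$ on the dilated region $A^{0}(M)/\fb$, so dyadic summation against $|f(\fb)|\ll \tau(\fb)^{C}$ together with the level-of-distribution hypothesis \eqref{LOD on number field} for $g$ yields an acceptable contribution. Range (ii) is symmetric via the same hypothesis for $f$. For the genuinely bilinear range (iii), I split the $\fq$-sum at $|\fq|=(\log N)^{E}$ for large $E$: the small-conductor piece is handled by the Siegel-Walfisz condition \eqref{SW condition number field} applied to $f$ and $g$ separately, combined by partial summation over the hyperbolic region; for $|\fq|>(\log N)^{E}$, I pass by M\"obius inversion from characters modulo $\fq$ to primitive characters of conductor dividing $\fq$, apply Cauchy--Schwarz in $(\fq,\chi)$ to separate the $\fb$-sum from the $\fc$-sum, and invoke a large-sieve inequality for primitive Hecke characters over $K$. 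The resulting bound is of shape $(|A^{0}(N)|+Q^{2})(\log N)^{O(1)}$ times the $\ell^{2}$-mass of $f$ and $g$ on dyadic ranges, and this matches because $Q^{2}\asymp|A^{0}(N)|(\log N)^{-2B}$.

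The main obstacle is the large-sieve inequality for primitive Hecke characters over $\OK$ against the archimedean region $A^{0}(M)$: this region is a disk in $\mathbb C$ rather than an interval, so one needs a form whose constant is genuinely $|A^{0}(M)|+Q^{2}$ up to $\log$ factors. Almost as delicate is the M\"obius switch from characters modulo $\fq$ to primitive characters of conductor dividing $\fq$, which must keep the loss at most logarithmic. Both steps crucially use the hypotheses of the theorem: class number one makes every ideal principal, so characters of $(\OK/\fq)^{*}$ behave like classical Dirichlet characters on residues; and the imaginary quadratic assumption collapses the archimedean cutoff to a single absolute value, so the bilinear region $\fb\fc\in A^{0}(M)$ factorises as $|\fb|\cdot|\fc|\le M$ and the hyperbolic decomposition proceeds as in the rational case.
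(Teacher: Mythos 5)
Your overall architecture (bilinear decomposition, level of distribution for the lopsided ranges, Siegel--Walfisz for small conductors, primitive characters plus Cauchy--Schwarz plus large sieve for the rest) is the right one, but the splitting parameter is wrong in a way that breaks the argument. You put the cut at $U\asymp |A^{0}(N)|^{1/2}/(\log N)^{B_{0}}$ and claim that for $|\fb|\le U$ the hypothesis \eqref{LOD on number field} for $g$ disposes of range (i). Fix such a $\fb$: the inner sum is the discrepancy of $g$ on the dilated region $A^{0}\bigl(M/|\fb|^{1/2}\bigr)$, of cardinality $\asymp |A^{0}(M)|/|\fb|$, and the level-of-distribution hypothesis for $g$ at that scale only controls moduli up to $\bigl(|A^{0}(M)|/|\fb|\bigr)^{1/2}(\log N)^{-B}$, whereas you must sum over all $|\fq|\le Q\asymp |A^{0}(N)|^{1/2}(\log N)^{-B}$. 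These are compatible only when $|\fb|\ll (\log N)^{O(1)}$; already for $|\fb|\asymp |A^{0}(N)|^{1/4}$ the hypothesis reaches only $|\fq|\lesssim |A^{0}(N)|^{3/8}$, and at $|\fb|\asymp U$ only $|\fq|\lesssim |A^{0}(N)|^{1/4}$. So essentially the whole bilinear region $(\log N)^{O(1)}<|\fb|<|A^{0}(N)|^{1/2-o(1)}$ is misallocated to a range where your tool cannot deliver the bound, while your genuinely bilinear range (iii) is only the thin slice $|\fb|\asymp|\fc|\asymp |A^{0}(N)|^{1/2}$. Range (ii) has the symmetric defect.

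The correct split, which is the one the paper (following Motohashi) uses, is to take the lopsided ranges to be $|\fb|\le (\log N)^{A'}$ and $|\fc|\le (\log N)^{B'}$ --- exactly the ranges in which, after dilation, \eqref{LOD on number field} for the complementary function still covers all $|\fq|\le Q$ --- and to treat the entire complementary range $(\log N)^{A'}<|\fb|\le |A^{0}(M)|(\log N)^{-B'}$ bilinearly: \eqref{SW condition number field} for $|\fq|\le (\log N)^{B}$, and for larger conductors the reduction to primitive characters, a Perron-type separation of the hyperbolic cutoff, dyadic decomposition in both $|\fq|$ and $|\fb|$, Cauchy--Schwarz, and the large sieve. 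The large sieve does handle this wide middle range: in the form of Lemma \ref{lem:main large sieve inequality} the restriction $|\fq|>Q_{1}$ buys the factor $|A^{0}(N)|/Q_{1}+Q_{2}$, and after Cauchy--Schwarz the cross terms are acceptable precisely because \emph{both} $|\fb|$ and $|\fc|$ exceed a fixed power of $\log N$ there. Relatedly, the bound you quote for range (iii), ``$(|A^{0}(N)|+Q^{2})(\log N)^{O(1)}$ times the $\ell^{2}$-masses,'' is not of the right shape (it is $\gg |A^{0}(N)|^{3/2}$); you need the $Q_{1}$-restricted form of the sieve to win the logarithms. Fixing the thresholds to logarithmic size and routing everything else through the bilinear large-sieve estimate closes the gap.
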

The following corollary is an iterative version of the above Theorem \ref{thm:Motohashi number field version}.
\begin{corollary}\label{corollary1}
Let $K$ be an imaginary quadratic field of class number one.
Let $f_i$ $(i=1,\ldots, n)$ be complex valued arithmetic functions on ring of integers $\mathcal{O}_K$ having common level of distribution $1/2$ such that $f_i(\zeta_{0}^{r}\fa)=f_i(\fa)$ for all $r$ and satisfies \eqref{SW condition number field} . Then the Dirichlet convolution $f_1*\ldots*f_n$ also satisfies \eqref{SW condition number field}  and have common level of distribution $1/2$.
\end{corollary}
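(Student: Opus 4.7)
The plan is a straightforward induction on $n$, using Theorem \ref{thm:Motohashi number field version} as the two-function base case. For $n=1$ there is nothing to show, and for $n=2$ the statement is precisely Theorem \ref{thm:Motohashi number field version}. So suppose the corollary holds for some $n-1\geq 2$, and set
\[
h := f_{2} * f_{3} * \cdots * f_{n}.
\]
Since Dirichlet convolution on $\OK$ (with class number one) is associative and commutative, we have $f_{1}*\cdots*f_{n} = f_{1}*h$, so it suffices to verify that $h$ falls under the hypothesis of Theorem \ref{thm:Motohashi number field version}, and then apply that theorem with the pair $(f_{1},h)$.

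First I would check the $\zeta_{0}$-invariance of $h$. Because $K$ has class number one, the Dirichlet convolution at $\fa$ can be viewed as a sum over the ideal divisors of $(\fa)$; since $(\zeta_{0}^{r}\fa)=(\fa)$ as ideals, the set of divisor ideals is unchanged, and choosing the same representatives shows $h(\zeta_{0}^{r}\fa)=h(\fa)$. Alternatively one may argue directly from $(f*g)(\zeta_{0}^{r}\fa)=\sum_{\fd|\fa}f(\fd)g(\zeta_{0}^{r}\fa/\fd)=\sum_{\fd|\fa}f(\fd)g(\fa/\fd)$ using the invariance of $g$, and iterate. Next, the induction hypothesis (applied to $f_{2},\ldots,f_{n}$, which are $n-1$ functions) gives that $h$ satisfies \eqref{SW condition number field} and has level of distribution $1/2$.

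At this point the pair $(f_{1},h)$ satisfies all the hypotheses of Theorem \ref{thm:Motohashi number field version}: both are $\zeta_{0}$-invariant, both satisfy \eqref{SW condition number field}, and both have common level of distribution $1/2$. Applying that theorem yields that $f_{1}*h = f_{1}*\cdots*f_{n}$ satisfies \eqref{SW condition number field} and has level of distribution $1/2$, completing the induction.

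I do not expect any genuine obstacle: the only non-trivial content of the induction step is already absorbed into Theorem \ref{thm:Motohashi number field version}. The mild points to be careful about are (i) propagating the $\zeta_{0}$-invariance, handled above by reducing the convolution to ideals, and (ii) making sure that the constants $C$, $D$ in the Siegel--Walfisz bound for $h$ (arising from the induction hypothesis) are admissible as input to Theorem \ref{thm:Motohashi number field version}; this is automatic since the theorem delivers some constants $C',D'$ for the output function, which then serve as valid input constants at the next iteration (possibly after enlarging $A$).
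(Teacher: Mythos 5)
Your proof is correct and matches the paper's intended argument: the paper gives no explicit proof of this corollary, describing it only as an ``iterative version'' of Theorem \ref{thm:Motohashi number field version}, which is exactly the induction you carry out. Your added care in checking that the $\zeta_{0}$-invariance and the \eqref{SW condition number field} condition propagate through the induction is a welcome bit of bookkeeping that the paper omits.
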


Another application of  Theorem \ref{thm:Motohashi number field version} is with $f=\mathds{1}_{w}$ the indicator function which takes value $1$ if $w$ is a prime element in $A^0(N)$ otherwise it is $0$.

\begin{corollary}\label{corollary2}
Let $K$ be an imaginary quadratic field of class number one.
If primes in $A^0(N)$ have level of distribution $1/2$ then product of two primes in $A^0(N)$ also have  level of distribution  $1/2$.
\end{corollary}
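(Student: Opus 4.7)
The plan is to apply Theorem \ref{thm:Motohashi number field version} directly with $f = g = \mathds{1}_{\mathcal{P}}$, the indicator function of prime elements of $\mathcal{O}_K$. The hypothesis of the corollary supplies common level of distribution $1/2$, so what remains is to verify (i) the roots-of-unity invariance $\mathds{1}_{\mathcal{P}}(\zeta_{0}^{r}\fa) = \mathds{1}_{\mathcal{P}}(\fa)$ and (ii) the Siegel-Walfisz condition \eqref{SW condition number field}.

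Invariance under $\zeta_{0}$ is immediate because $\zeta_{0}^{r}$ is a unit of $\mathcal{O}_K$ and multiplication by a unit preserves the property of generating a prime ideal. The pointwise bound $\mathds{1}_{\mathcal{P}}(\fa) \le 1 \le \tau(\fa)^{C}$ holds trivially. The remaining ingredient is the character-sum cancellation
\[
\sum_{\fa \in A^{0}(N)} \mathds{1}_{\mathcal{P}}(\fa)\chi(\fa) \ll \frac{|A^{0}(N)|}{(\log N)^{3D}}
\]
for every non-principal character $\chi \pmod{\fq}$ with $|\fq| \ll (\log N)^{D}$. This is the classical Siegel-Walfisz estimate for prime elements of an imaginary quadratic number field of class number one; it follows from a standard Landau-type zero-free region for Hecke $L$-functions together with a Siegel-type estimate to handle the possible exceptional real zero, exactly as in the rational case.

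With both hypotheses verified, Theorem \ref{thm:Motohashi number field version} yields level of distribution $1/2$ for the Dirichlet convolution
\[
(\mathds{1}_{\mathcal{P}} * \mathds{1}_{\mathcal{P}})(\fa) = \sum_{\fb\fc = \fa} \mathds{1}_{\mathcal{P}}(\fb)\mathds{1}_{\mathcal{P}}(\fc),
\]
which equals $2$ on $\fa = \fp_{1}\fp_{2}$ with $\fp_{1} \ne \fp_{2}$, equals $1$ on $\fa = \fp^{2}$, and vanishes otherwise. Since the number of prime squares in $A^{0}(M)$ is $O(|A^{0}(M)|^{1/2})$, the diagonal contribution is negligible relative to the main term $|A^{0}(N)|(\log N)^{-A}$ in \eqref{LOD on number field}. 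Consequently the distribution of $\mathds{1}_{\mathcal{P}} * \mathds{1}_{\mathcal{P}}$ in arithmetic progressions differs from twice that of the indicator of products of two distinct primes by an admissible error, giving the corollary.

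The only non-formal step is recording the number-field Siegel-Walfisz estimate; once that is in hand, the corollary is an immediate specialisation of Theorem \ref{thm:Motohashi number field version}, and I expect no further obstacle.
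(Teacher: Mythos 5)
Your proposal is correct and follows essentially the same route as the paper: specialise Theorem \ref{thm:Motohashi number field version} to $f=g=\mathds{1}_{\mathcal{P}}$ after checking unit-invariance and the Siegel--Walfisz condition, the latter being exactly what the paper supplies by quoting Hinz's Lemma 2 (your appeal to the standard Hecke $L$-function zero-free region is the same fact in substance). Your extra remarks on the diagonal $\fp^2$ terms and the multiplicity $2$ are a harmless refinement the paper omits.
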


\begin{remark}
If $f$ and $g$ have level of distribution $\vartheta$ for $0<\vartheta\leq 1/2$ then it is clear from the proof of the Theorem \ref{thm:Motohashi number field version} that $f*g$ also have level of distribution $\vartheta$.
\end{remark}

Hinz \cite{HIN1} showed  that primes have level of distribution $2/5$ in imaginary quadratic number field. 
Using this result, an application of Corollary \ref{corollary2} we get the following.

\begin{corollary}\label{corollary3}
Let $K$ be an imaginary quadratic field of class number one.
Then product of two primes in $A^0(N)$  have level of distribution $2/5$.
\end{corollary}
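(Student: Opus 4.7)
The plan is to deduce Corollary \ref{corollary3} by applying Theorem \ref{thm:Motohashi number field version} in the form spelled out in the Remark following Corollary \ref{corollary2}, which extends the conclusion to any level $\vartheta\le 1/2$. I take $f=g=\mathds{1}_{w}$, the indicator function of prime elements of $\OK$; then $(\mathds{1}_w*\mathds{1}_w)(\fa)$ will, up to a factor of $2$ on squarefree $\fa$ and an error supported on prime squares, coincide with the indicator function of products of two primes.

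Before invoking the theorem I would verify its hypotheses for $\mathds{1}_w$. The invariance $\mathds{1}_w(\zeta_0^r\fa)=\mathds{1}_w(\fa)$ is clear because units preserve primality. The pointwise bound in \eqref{SW condition number field} is trivial, and the character sum estimate
\[
\sum_{\fa\in A^0(N)}\mathds{1}_w(\fa)\chi(\fa)\ll \frac{|A^0(N)|}{(\log N)^{3D}}
\]
for non-principal $\chi\pmod\fq$ with $|\fq|\ll(\log N)^D$ is the Siegel--Walfisz theorem for primes in an imaginary quadratic field, a standard consequence of the classical zero-free region and Siegel's theorem for the Hecke $L$-function $L(s,\chi)$. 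Finally, Hinz's theorem in \cite{HIN1} provides level of distribution $2/5$ for $\mathds{1}_w$ in the form \eqref{LOD on number field}, once one matches his formulation with the $A^0(N)$-box normalisation used here.

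Applying Theorem \ref{thm:Motohashi number field version} (with $\vartheta=2/5$, as permitted by the Remark) to $f=g=\mathds{1}_w$ then yields that $\mathds{1}_w*\mathds{1}_w$ satisfies \eqref{SW condition number field} and has level of distribution $2/5$. To transfer this to $\mathds{1}_{w_1w_2}$, the indicator of products of two primes, I note that $(\mathds{1}_w*\mathds{1}_w)(\fa)$ equals $2$ when $\fa$ is a product of two distinct primes, $1$ when $\fa$ is a prime square, and $0$ otherwise, so $\mathds{1}_{w_1w_2}=\tfrac{1}{2}(\mathds{1}_w*\mathds{1}_w)+O(\mathds{1}_{w^2})$. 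The contribution of the prime-square remainder to the left-hand side of \eqref{LOD on number field} is trivially bounded: there are only $O(|A^0(N)|^{1/2})$ prime squares in $A^0(N)$, so summing over $|\fq|\le Q$ gives $O(Q|A^0(N)|^{1/2})=O(|A^0(N)|^{9/10}/(\log N)^B)$, which is absorbed into the $|A^0(N)|(\log N)^{-A}$ error.

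The main obstacle I anticipate is really notational rather than substantive: it lies in confirming that Hinz's Bombieri--Vinogradov-type result for imaginary quadratic fields is stated in a form compatible with the $A^0(N)$ boxes and with the cutoff $|\fq|\le|A^0(N)|^{2/5}(\log N)^{-B}$ appearing in \eqref{LOD on number field}. Once this translation is in place, the rest is a direct appeal to Theorem \ref{thm:Motohashi number field version} together with the trivial prime-square bookkeeping above.
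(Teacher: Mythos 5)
Your proposal is correct and follows essentially the same route as the paper: verify the Siegel--Walfisz condition \eqref{SW condition number field} for $\mathds{1}_w$ (the paper does this via Hinz's character-sum lemma in the proof of Corollary \ref{corollary2}), import Hinz's level of distribution $2/5$ for primes from \cite{HIN1}, and apply Theorem \ref{thm:Motohashi number field version} in the $\vartheta\le 1/2$ form permitted by the Remark. Your explicit bookkeeping for the prime-square terms in passing from $\mathds{1}_w*\mathds{1}_w$ to the indicator of products of two primes is a detail the paper leaves implicit, but it does not alter the argument.
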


\section{Preliminary Lemmas}

The following lemma is Perron summation formula in imaginary quadratic number field.
\begin{lemma}\label{lem:perron1}
Let $K$ be an imaginary quadratic number field.
Let $\tilde{f}$ be complex valued arithmetic functions on ring of integers $\mathcal{O}_K$ such that $\tilde{f}(\zeta_{0}^{*}\fa)=\tilde{f}(\fa)$ and satisfy $\tilde{f}(\fa)=O\left(\tau(\fa)^C\right)$. Then for $k\geq 1$ we have,
\[
\sum_{w\in A^0(N)}\tilde{f}(w)\log ^k\left(\frac{N^2}{|w|}\right)=\frac{w_K k!}{2\pi i}\int_{\sigma-iT}^{\sigma+iT}\tilde{F}(s)\frac{(N^2)^s}{s^{k+1}}ds+O\left(\frac{N^2}{T^k}\right)
\]
where $w_K$ is the number of roots of unity of $K$, $\sigma=\Re e(s)>1$ and 
\[
\tilde{F}(s)=\sum_{w\in \mathcal{O}_K}\frac{\tilde{f}(w)}{|w|^s}.
\]
\end{lemma}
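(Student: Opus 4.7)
The plan is to apply the classical effective Perron formula pointwise to $y=N^{2}/|w|$ and then sum against $\tilde{f}$. The starting point is the truncated Mellin identity, valid for $\sigma>0$, $T\ge 1$, $y>0$, $k\ge 1$:
\[
\frac{1}{2\pi i}\int_{\sigma-iT}^{\sigma+iT}\frac{y^{s}}{s^{k+1}}\,ds = \frac{(\log y)^{k}}{k!}\,\mathbf{1}_{y>1} + R_{k}(y,T),
\]
where, by shifting the contour to $\Re e(s)=-k$ and collecting the residue at $s=0$, one obtains $R_{k}(y,T)\ll y^{\sigma}/(T^{k+1}|\log y|)$ whenever $y\ne 1$, and $R_{k}(1,T)\ll 1/T^{k}$. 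This is a classical, field-independent fact.

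First I will substitute $y=N^{2}/|w|$, multiply by $k!\,\tilde{f}(w)$, and sum over $w\in\mathcal{O}_{K}\setminus\{0\}$. Taking $\sigma=1+1/\log N$ makes the Dirichlet series $\tilde{F}(s)$ absolutely convergent on the vertical line (since $\tilde{f}(w)\ll\tau(w)^{C}$), which legitimises swapping the truncated integral with the infinite sum. Doing so reproduces $\frac{k!}{2\pi i}\int \tilde{F}(s)(N^{2})^{s}s^{-k-1}ds$ on one side and, on the other, yields the main term $\sum_{1\le|w|<N^{2}}\tilde{f}(w)\log^{k}(N^{2}/|w|)$. Using the identification of $A^{0}(N)$ with $\{w\in\mathcal{O}_{K}: 1\le|w|\le N^{2}\}$ (valid in the imaginary quadratic case because $|\sigma(w)|=|w|^{1/2}$) together with the unit-invariance $\tilde{f}(\zeta_{0}^{r}\fa)=\tilde{f}(\fa)$ accounts for the $w_{K}$-fold overcounting by roots of unity, so the main term is exactly the left-hand side of the lemma.

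It remains to show that the error $k!\sum_{w}\tilde{f}(w)R_{k}(N^{2}/|w|,T)$ is $O(N^{2}/T^{k})$. I split into the far range $\{|w|\le N^{2}/2\}\cup\{|w|\ge 2N^{2}\}$, where $|\log(N^{2}/|w|)|\gg 1$, and the near range $\{N^{2}/2<|w|<2N^{2}\}$. In the far range, $R_{k}\ll (N^{2}/|w|)^{\sigma}/T^{k+1}$ together with $\tilde{F}(\sigma)\ll(\log N)^{O(1)}$ gives a contribution $O(N^{2}(\log N)^{O(1)}/T^{k+1})$, which is absorbed. In the near range I dyadically decompose over $N^{2}/2^{j+1}<|N^{2}-|w||\le N^{2}/2^{j}$, use the thin-annulus lattice count $\#\{w\in\mathcal{O}_{K}:|w|\in[X,X+Y]\}\ll Y+\sqrt{X}$ (elementary for a lattice in $\mathbb{C}$), and the divisor-type bound $\sum|\tilde{f}(w)|\ll X(\log X)^{O(1)}$ to obtain the decisive $O(N^{2}/T^{k})$ contribution.

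The main obstacle is the near-boundary range: there $R_{k}$ only achieves the weaker size $1/T^{k}$, so any lossy estimate for the number of $w$ with $|w|$ near $N^{2}$ would ruin the error. The geometry of $\mathcal{O}_{K}$ in an imaginary quadratic field, where norm equals squared Euclidean length, makes the thin-annulus count tight; this is exactly the feature highlighted in the preceding remark and is why the method does not extend beyond imaginary quadratic fields. Once this count is in hand, combining it with the effective Perron remainder and the divisor bound on $\tilde{f}$ yields the stated error term.
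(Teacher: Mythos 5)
Your argument is correct and is, at bottom, the same as the paper's: the paper reduces $\sum_{w\in A^0(N)}$ to $w_K\sum_{|w|\le N^2}$ exactly as you do (using $|\sigma(w)|=|w|^{1/2}$ and the fact that the units are precisely the roots of unity) and then simply cites the effective Perron formula from Tenenbaum (p.~134), whereas you re-derive that formula from the truncated Mellin identity. The one place where you genuinely diverge is the treatment of the remainder: your far-range/near-range split with the thin-annulus count $\#\{w:|w|\in[X,X+Y]\}\ll Y+\sqrt{X}$ is machinery for the sharp-cutoff case $k=0$ and is not needed here. For $k\ge 1$ the trivial tail estimate already gives the uniform bound $R_k(y,T)\ll y^{\sigma}/T^{k}$ for \emph{all} $y$, so summing against $|\tilde f(w)|/|w|^{\sigma}$ yields at once an error $\ll (N^2)^{\sigma}\,\tilde F^{*}(\sigma)/T^{k}$ with $\tilde F^{*}(\sigma)=\sum|\tilde f(w)||w|^{-\sigma}\ll(\log N)^{O(1)}$ at $\sigma=1+1/\log N$; no lattice-point geometry enters. (Note that this $(\log N)^{O(1)}$ factor is also implicitly present in the paper's $O(N^2/T^k)$, which is stated for a generic $\sigma>1$ and is harmless since $T$ is taken enormous in the application; your near-range analysis removes it there but your far-range bound reintroduces it, so nothing is gained.) In short: correct, same route, with an unnecessary detour in the error analysis.
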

\begin{proof}
We know that the number of roots of unity in imaginary quadratic field is $2, 4$ or $6$.
Using this and a Theorem from Tenenbaum [page 134, \cite{TEN}] we have
\begin{align*}
\sum_{w\in A^0(N)}\tilde{f}(w)\log ^k\left(\frac{N^2}{|w|}\right)&=w_K\sum_{|w|\leq N^2}\tilde{f}(w)\log ^k\left(\frac{N^2}{|w|}\right)\\
&=\frac{w_K k!}{2\pi i}\int_{\sigma-iT}^{\sigma+iT}\tilde{F}(s)\frac{(N^2)^s}{s^{k+1}}ds+O\left(\frac{N^2}{T^k}\right).
\end{align*}
\end{proof}

We next state the large sieve inequality for number field $K$ of degree $d>1$. Let, $\theta_1, \cdots, \theta_d$ be an integral basis of $K$ so that every integer $\xi$ of $K$ is representable uniquely as 
\[
\xi=n_1\theta_1+\cdots +n_d\theta_d
\] 
where $n_1, \cdots, n_d$ are rational integers.

If we take an element say $\xi\in A^0(Y', N+Y')$ then as we take a fixed integral basis $\theta_1, \cdots, \theta_d$ of $K$ so the element $\xi$ can be written as 
\[
\xi=n_1\theta_1+\cdots +n_d\theta_d
\] 
where $C_2 Y'<|n_i|<C_1(Y'+N), i=1,2, \cdots, d$ and $C_1, C_2$ are depending on $K.$

\begin{lemma}[\cite{HIN2}]\label{lem:large sieve inequality in number field}
Let, $f(x)$ be a positive decreasing continuous function on $Q_1<x\leq Q_2.$ Then we have,
\begin{align*}
&\sum_{Q_1<|\fq|\leq Q_2}f(|\fq|)\frac{|\fq|}{\phi(\fq)}\sum_{\chi(\fq)}^{*}|\sum_{\xi\in A^0(Y', N+Y')}c(\xi)\chi(\xi)|^2 \\
& \ll \left(f(Q_1)(Q_1^2+|A^0(N)|)+\int_{Q_1}^{Q_2}xf(x)dx\right)\sum_{\xi\in A^0(Y', N+Y')}|c(\xi)|^2
\end{align*}
where $\sum^{*}$ denotes summation over primitive multiplicative characters $\chi\pmod \fq .$
\end{lemma}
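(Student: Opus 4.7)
The plan is to establish the inequality by combining the standard Bombieri--Davenport/Gallagher approach with the geometry-of-numbers picture afforded by the integral basis $\theta_1, \ldots, \theta_d$, and then introduce the monotone weight $f$ by partial summation. First I would prove the unweighted analogue
\[
\sum_{|\fq| \le Q} \frac{|\fq|}{\phi(\fq)} \sum_{\chi \pmod \fq}^{*} \Bigl|\sum_{\xi \in A^0(Y', N+Y')} c(\xi)\chi(\xi)\Bigr|^2 \ll \bigl(Q^2 + |A^0(N)|\bigr)\sum_{\xi} |c(\xi)|^2,
\]
and then extend it to the weighted form by writing $f(|\fq|) = f(Q_2) + \int_{|\fq|}^{Q_2}(-f'(x))\,dx$ and integrating against the unweighted bound applied on subintervals $[Q_1, x]$; the boundary term produces $f(Q_1)(Q_1^2 + |A^0(N)|)$ and the integrated term becomes $\int_{Q_1}^{Q_2} x f(x)\,dx$.

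For the unweighted inequality, I would express each primitive $\chi \pmod{\fq}$ via its Gauss sum $\tau(\chi) = \sum_{a \pmod \fq, (a,\fq)=1} \chi(a) e_K(a/\fq)$, where $e_K(\cdot) = e(\operatorname{Tr}_{K/\mathbb{Q}}(\cdot / \fd))$ is the standard additive character attached to the different $\fd$. This reduces the bilinear form to an additive large sieve
\[
\sum_{|\fq| \le Q} \sum_{\substack{a \pmod \fq \\ (a, \fq) = 1}} \Bigl|\sum_{\xi \in A^0(Y', N+Y')} c(\xi) e_K(a\xi/\fq)\Bigr|^2.
\]
Using the integral basis, each $\xi \in A^0(Y', N+Y')$ corresponds to $(n_1, \ldots, n_d) \in \mathbb{Z}^d$ inside a box of side $\asymp N$, and each fraction $a/\fq$ yields a point $\boldsymbol{\alpha} \in (\mathbb{R}/\mathbb{Z})^d$ whose coordinates are the coefficients of $a\theta_i^*/\fq$ in the dual basis. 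The set of such points over $|\fq| \le Q$ is $\delta$-well-spaced with $\delta \asymp 1/Q^{2/d}$ coordinate-wise, which is exactly the input for the multidimensional Gallagher/Montgomery majorant principle, yielding the $(Q^2 + N^d)$ bound with $N^d \asymp |A^0(N)|$.

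The hard part will be verifying the well-spacing of the multidimensional Farey set $\{a/\fq : |\fq| \le Q, (a,\fq)=1\}$ in $K_\infty / \OK$. In the rational case this is immediate from $|a/q - a'/q'| \ge 1/qq'$, but here one needs the analogue $\| (a\fq' - a'\fq)/\fq\fq' \| \gtrsim 1/(|\fq||\fq'|)^{1/d}$ in each embedding direction, which uses both the class-number-one assumption (so that one can work with generators of ideals) and the equality $|\sigma(w)| = |w|^{1/d}$ valid in imaginary quadratic fields, as highlighted in the remark preceding the lemma. Once this separation is established, the rest of the argument is the standard analytic large sieve with Selberg's majorant, and the extraction of the primitive characters via Möbius inversion over ideal divisors of $\fq$ introduces only the familiar $|\fq|/\phi(\fq)$ factor. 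The passage from the unweighted to the weighted form via partial summation is then routine since $f$ is positive and decreasing.
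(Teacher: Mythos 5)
The paper itself contains no proof of this lemma: it is quoted directly from Hinz \cite{HIN2}, whose argument (built on Huxley's large sieve for algebraic number fields) follows essentially the route you describe — reduce primitive multiplicative characters to additive ones via Gauss sums (for primitive $\chi$ one has $|\tau(\chi)|^2=|\fq|$, and expanding to all characters mod $\fq$ by positivity and orthogonality is precisely what produces the factor $|\fq|/\phi(\fq)$, so no M\"obius inversion is needed at that point), verify the spacing of the Farey points $a/\fq$ in $(K\otimes\mathbb{R})/\OK$, apply the multidimensional analytic large sieve over the box cut out by the integral basis, and finally insert the weight $f$ by partial summation. Your partial-summation computation is correct: the Stieltjes boundary term together with $\int_{Q_1}^{Q_2}(x^2+|A^0(N)|)\,(-df(x))$ yields exactly $f(Q_1)(Q_1^2+|A^0(N)|)+2\int_{Q_1}^{Q_2}xf(x)\,dx$ after integrating by parts. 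Two caveats are worth recording. First, the lemma as stated holds for a number field of arbitrary degree $d$, so the well-spacing step cannot lean on the identity $|\sigma(w)|=|w|^{1/d}$ or on class number one as you suggest; in general one must first multiply generators by suitable units (Siegel's lemma, exactly the content of the Remark in the paper) so that all conjugates of $a\fq'-a'\fq$ have comparable size, and only then does $|N(a\fq'-a'\fq)|\ge 1$ give the required coordinate-wise separation $\gg(|\fq||\fq'|)^{-1/d}$. In the imaginary quadratic, class-number-one application this normalization is vacuous and your simpler argument suffices. Second, $f$ is only assumed continuous and decreasing, not differentiable, so the identity $f(|\fq|)=f(Q_2)+\int_{|\fq|}^{Q_2}(-f'(x))\,dx$ should be replaced by the Riemann--Stieltjes version $f(|\fq|)=f(Q_2)-\int_{|\fq|}^{Q_2}df(x)$; the rest of the computation is unchanged.
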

As an application of the Lemma \ref{lem:large sieve inequality in number field} with $f(x)=1/x$ we get the following lemma.
\begin{lemma}\label{lem:main large sieve inequality}
For any positive numbers $Q_1$ and $Q_2$ with $Q_1<Q_2$ we have,
\[
\sum_{Q_1<|\fq|\leq Q_2}\frac{1}{\phi(\fq)}\sum_{\chi(\fq)}^{*}|\sum_{\xi\in A^0(Y', N+Y')}c(\xi)\chi(\xi)|^2 \\
 \ll \left(\frac{|A^0(N)|}{Q_1}+Q_2\right)\sum_{\xi\in A^0(Y', N+Y')}|c(\xi)|^2
\]
where $\sum^{*}$ denotes summation over primitive multiplicative characters $\chi\pmod \fq .$
\end{lemma}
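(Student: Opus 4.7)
The plan is to apply Lemma \ref{lem:large sieve inequality in number field} directly with the specific choice $f(x)=1/x$, which is positive, continuous and strictly decreasing on $(Q_1,Q_2]$ for any $Q_1>0$, so it is admissible. The text explicitly announces this route (``As an application of the Lemma \ldots with $f(x)=1/x$''), so the task is simply to substitute and simplify both sides.

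First I would handle the left-hand side. With $f(|\fq|)=1/|\fq|$, the weight in Lemma \ref{lem:large sieve inequality in number field} becomes
\[
f(|\fq|)\,\frac{|\fq|}{\phi(\fq)} \;=\; \frac{1}{|\fq|}\cdot\frac{|\fq|}{\phi(\fq)} \;=\; \frac{1}{\phi(\fq)},
\]
which is exactly the weight appearing on the left-hand side of the claim. So no manipulation beyond substitution is needed on that side.

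Next I would evaluate the two contributions on the right-hand side. The first is
\[
f(Q_1)\bigl(Q_1^{2}+|A^{0}(N)|\bigr) \;=\; \frac{Q_1^{2}+|A^{0}(N)|}{Q_1} \;=\; Q_1 + \frac{|A^{0}(N)|}{Q_1},
\]
and the second is the elementary integral
\[
\int_{Q_1}^{Q_2} x\,f(x)\,dx \;=\; \int_{Q_1}^{Q_2} dx \;=\; Q_2 - Q_1.
\]
Adding these two terms collapses to $\frac{|A^{0}(N)|}{Q_1}+Q_2$, which is the bound asserted in Lemma \ref{lem:main large sieve inequality}.

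There is no real obstacle: the statement is a direct specialization, and the only arithmetic to check is the cancellation $Q_1+(Q_2-Q_1)=Q_2$ that eliminates the spurious $Q_1$ term. The only conceptual point worth noting is that the hypotheses of the parent lemma require $f$ to be positive, decreasing and continuous on the half-open interval, all of which are obvious for $1/x$ away from the origin; since $Q_1>0$ is implicit (otherwise the LHS of the conclusion diverges), this is unproblematic.
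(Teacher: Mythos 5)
Your proposal is correct and is exactly the route the paper intends: the paper states this lemma as an immediate application of Lemma \ref{lem:large sieve inequality in number field} with $f(x)=1/x$ and omits the computation entirely, which you have simply carried out (the weight simplification to $1/\phi(\fq)$ and the cancellation $Q_1+(Q_2-Q_1)=Q_2$ are both right). Nothing further is needed.
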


The following lemma is a consequence of Minkowski's lattice point theorem (see \cite[page 12]{CAS}).
\begin{lemma}\label{lem:size of A(N)}
Let $A^{0}(N)$ be defined as above. We have,
 \[ 
  |A^0(N)|=(1+o(1)) \frac{(2\pi)^{r_2}N^d}{\sqrt{|D_K|}}
 \]
where $D_K$ is the discriminant of number field $K$ of degree $d.$
\end{lemma}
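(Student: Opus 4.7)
The plan is to realize $A^0(N)$ as the set of lattice points of $\OK$ lying inside a fixed bounded region of $\mathbb{R}^d$, and then compare the count with the Euclidean volume of the region.

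First I would use the Minkowski embedding
$$\iota \colon \OK \hookrightarrow \mathbb{R}^{r_1} \times \mathbb{C}^{r_2}, \qquad \iota(\xi) = \bigl(\sigma_1(\xi),\ldots,\sigma_{r_1}(\xi),\tau_1(\xi),\ldots,\tau_{r_2}(\xi)\bigr),$$
where $\sigma_i$ are the real and $\tau_j$ the (choice-of-representative) complex embeddings. Identifying $\mathbb{R}^{r_1}\times\mathbb{C}^{r_2}$ with $\mathbb{R}^d$, the image $\Lambda = \iota(\OK)$ is a full-rank lattice whose covolume, computed from an integral basis $\theta_1,\ldots,\theta_d$ via the determinant defining $D_K$, equals $2^{-r_2}\sqrt{|D_K|}$. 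Under $\iota$, the definition of $A^0(N)$ translates directly into $A^0(N) = \Lambda \cap R(N)$, where
$$R(N) = [1,N]^{r_1} \times \{z\in\mathbb{C} : 1\le |z|\le N\}^{r_2}.$$

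A short direct computation then gives
$$\mathrm{vol}(R(N)) = (N-1)^{r_1}\bigl(\pi(N^2-1)\bigr)^{r_2} = \pi^{r_2}N^d\bigl(1+O(1/N)\bigr).$$
Finally I would invoke the standard quantitative form of Minkowski's lattice-point theorem referenced in \cite{CAS}: for a bounded region $R\subset\mathbb{R}^d$ with sufficiently regular boundary,
$$|\Lambda\cap R| = \frac{\mathrm{vol}(R)}{\mathrm{covol}(\Lambda)} + O\bigl(\text{(boundary measure)}\bigr).$$
Since $R(N)$ is a product of intervals and annuli, its $(d-1)$-dimensional boundary measure is $O(N^{d-1})$. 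Plugging in gives main term
$$\frac{\pi^{r_2}N^d}{2^{-r_2}\sqrt{|D_K|}} = \frac{(2\pi)^{r_2}N^d}{\sqrt{|D_K|}},$$
with an error of order $N^{d-1}$, which is absorbed into the $(1+o(1))$ factor.

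I do not expect a serious obstacle. The only point that needs genuine care is the factor $2^{-r_2}$ in the covolume of $\Lambda$, which arises from identifying each conjugate pair of complex embeddings with a single copy of $\mathbb{C}=\mathbb{R}^2$; this is exactly what combines with the $\pi^{r_2}$ coming from the annular volumes to produce the clean constant $(2\pi)^{r_2}/\sqrt{|D_K|}$ claimed in the lemma.
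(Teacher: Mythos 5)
Your proof is correct and is exactly the argument the paper has in mind: the paper itself gives no proof of this lemma, only the remark that it follows from lattice-point counting as in \cite{CAS}, and your Minkowski-embedding computation (covolume $2^{-r_2}\sqrt{|D_K|}$, region volume $\pi^{r_2}N^d(1+O(1/N))$, boundary error $O(N^{d-1})$) is the standard way to carry that out. The only cosmetic point is that the counting principle you invoke is really the Lipschitz/Davenport lattice-point principle rather than Minkowski's convex-body theorem, but that matches the paper's own (loose) attribution and does not affect the argument.
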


\begin{lemma}\label{lem:merten's theorem on number field}
Let $K$ be an algebraic number field. For any natural number $R,$ we have
\[
\sum_{\substack{\fu\subseteq \mathcal{O}_K \\ |\fu|<R}}\frac{1}{\mid\fu\mid} \ll_{K} \log R,
\]
and
\[
\sum_{\substack{\fp\in \mathcal{P} \\ |\fp|\leq R}}\frac{1}{|\fp|}\ll_{K} \log \log R 
\]
where first sum is over all non-zero integral ideals of $\OK$ whose norm is less than or equal to R.
\end{lemma}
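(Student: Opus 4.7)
The plan is to handle the two estimates separately, reducing each to a classical result: the first to the standard asymptotic for the ideal-counting function of $K$, and the second to Mertens' theorem over $\mathbb{Q}$, the reduction being mediated by the splitting of rational primes in $\OK$.

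For the first sum, I would invoke the Weber--Landau asymptotic
\[
\#\{\fa \subseteq \OK : |\fa| \leq x\} = \kappa_K\, x + O_K\!\left(x^{1-1/d}\right),
\]
valid for any number field $K$ of degree $d$, where $\kappa_K$ is the residue of $\zeta_K$ at $s=1$. Grouping ideals by norm $n$ and applying Abel summation to the counting function then gives
\[
\sum_{\substack{\fu \subseteq \OK \\ |\fu| < R}} \frac{1}{|\fu|} \;=\; \kappa_K \log R + O_K(1),
\]
which is the claimed bound.

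For the prime ideal sum, I would exploit that every prime ideal $\fp$ of $\OK$ lies above a unique rational prime $p$, with $|\fp| = p^{f(\fp)} \geq p$, and that at most $d = [K:\mathbb{Q}]$ prime ideals lie above each $p$. Hence if $|\fp| \leq R$ then $p \leq R$ and $1/|\fp| \leq 1/p$, so that
\[
\sum_{\substack{\fp \\ |\fp| \leq R}} \frac{1}{|\fp|} \;\leq\; d \sum_{p \leq R} \frac{1}{p} \;\ll_K\; \log\log R
\]
by the classical Mertens' theorem over $\mathbb{Q}$. There is no real obstacle: both bounds follow immediately from the stated inputs once the splitting observation is made, and the dependence of the implicit constants on $K$ (through $\kappa_K$ and $d$) is absorbed into the $\ll_K$ appearing in the statement.
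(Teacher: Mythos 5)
Your proof is correct. The paper states this lemma without proof (it is treated as a standard fact), and your argument is exactly the canonical one the authors implicitly rely on: the Weber--Landau asymptotic for the ideal-counting function plus Abel summation for the first bound, and the reduction to rational Mertens via $|\fp|=p^{f(\fp)}\geq p$ together with the fact that at most $d$ prime ideals lie above each rational prime for the second. Both steps are complete and the dependence on $K$ is correctly tracked.
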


\section{Proof of Theorem \ref{thm:Motohashi number field version}}
\begin{proof}
We assume that $M>N^{1/2}.$
For $M\leq N$ and $(\gamma, \fa)=1$ we have
\begin{align*}
\varepsilon(M;\fq,\fa;f*g)=\sum_{\substack{\xi\eta \in A^0(M)\\ \xi\eta\equiv \fa (\fq)}}f(\xi)g(\eta)-\frac{1}{\phi(\fq)}\sum_{\substack{\xi\eta \in A^0(M)\\ (\xi\eta, \fq)=1}}f(\xi)g(\eta).
\end{align*}

Now, since $\xi\eta\in A^0(M)$, we can divide the range of summation over $\xi$ and $\eta$ as follows:
$|\xi|\leq (\log N)^{A'}, (\log N)^{A'}< |\xi|\leq |A^0(M)|(\log N)^{-B'}, |\eta|\leq (\log N)^{B'}.$ 

Therefore using $|A^0\left(\frac{M}{|\eta|^{1/2}|}\right)|=(1+o(1))\frac{1}{|\eta|}|A^0(M)|$ the term $\varepsilon(M;\fq,\fa;f*g)$ can be written as 
\begin{align*}
&\varepsilon(M;\fq,\fa;f*g)= \sum_{\substack{|\xi|<(\log N)^{A'}\\(\xi, \fq)=1}}f(\xi)\varepsilon\left(\frac{M}{|\xi|^{1/2}};\fq,\xi^{-1}\fa;g\right)\\
&+ \sum_{\substack{(\log N)^{A'}< |\xi|\leq |A^0(M)|(\log N)^{-B'}\\(\xi, \fq)=1}}f(\xi)\varepsilon\left(\frac{M}{|\xi|^{1/2}};\fq,\xi^{-1}\fa;g\right)\\
&+  \sum_{\substack{|\eta|\leq (\log N)^{B'}\\(\eta, \fq)=1}}g(\eta)\left\lbrace\varepsilon\left(\frac{M}{|\eta|^{1/2}};\fq,\eta^{-1}\fa;f\right)-\varepsilon\left(\min\left(\frac{M}{|\eta|^{1/2}}, \frac{N}{(\log N)^{A'/2}}\right);\fq,\eta^{-1}\fa;f\right)\right\rbrace\\
&=:\Sigma_1+\Sigma_2+\Sigma_3.
\end{align*}
Since $|\xi|\leq (\log N)^{A'},$ using \eqref{LOD on number field} and $|A^0\left(\frac{N}{|\xi|^{1/2}|}\right)|=(1+o(1))\frac{1}{|\xi|}|A^0(N)|,$ by taking summation over norm of $\fq$ of the sum $\Sigma_1$ we have,
\begin{align*}
\sum_{|\mathfrak{q}|\le Q}\max_{M\leq N}\max_{(\gamma, \fq)=1}\Sigma_1 & \ll \sum_{\substack{|\xi|<(\log N)^{A'}}}|f(\xi)|\sum_{\substack{|\mathfrak{q}|\le \frac{|A^0\left(N/|\xi|^{1/2}\right)|^{1/2}}{\left(\log \left(N/|\xi|^{1/2}\right)\right)^{B'}}\\ (\xi, \fq)=1}}\max_{M\leq N}\max_{(\gamma, \fq)=1}\varepsilon\left(\frac{M}{|\xi|^{1/2}};\fq,\xi^{-1}\fa;g\right)\\
& \ll \sum_{\substack{|\xi|<(\log N)^{A'}}}\tau(\xi)^C \frac{|A^0(N)|}{|\xi|\log^A\left(N/|\xi|^{1/2}\right)}\ll \frac{|A^0(N)|}{(\log N)^{D'}}
\end{align*}
where $D'$ is a constant depending  on $A'$ and $C.$\\
Similarly as above we have,
\[
\sum_{|\mathfrak{q}|\le Q}\max_{M\leq N}\max_{(\gamma, \fq)=1}\Sigma_3\ll \frac{|A^0(N)|}{(\log N)^{D'}}.
\]
Therefore finally we have to estimate the following sum:
\[
\Sigma_4=\sum_{|\mathfrak{q}|\le Q}\max_{M\leq N}\max_{(\xi, \fq)=1} \sum_{\substack{(\log N)^{A'}< |\xi|\leq |A^0(M)|(\log N)^{-B'}\\(\xi, \fq)=1}}f(\xi)\varepsilon\left(\frac{M}{|\xi|^{1/2}};\fq,\xi^{-1}\fa;g\right).
\]
Now using the orthogonality of characters in algebraic number field the innermost sum of $\Sigma_4$ can be written as 
\begin{align*}
\sum_{\substack{A_1< |\xi|\leq A_2\\(\xi, \fq)=1}}f(\xi)\varepsilon\left(\frac{M}{|\xi|^{1/2}};\fq,\xi^{-1}\fa;g\right)=\frac{1}{\phi(\fq)}\sum_{\chi\neq \chi_{o}}\bar{\chi}(\fa)\sum_{\substack{A_1< |\xi|\leq A_2\\(\xi, \fq)=1}}f(\xi)\chi(\xi)\sum_{\substack{w\in A^0\left(\frac{M}{|\xi|^{1/2}}\right)\\ (w, \fq)=1}}\chi(w)g(w)
\end{align*}
where $\chi_{o}$ be the principal character $\pmod\fq$ and $A_1:=(\log N)^{A'}, A_2:=\frac{|A^0(M)|}{(\log N)^{B'}}.$
Therefore, using this estimation, the sum $\Sigma_4$ can be written as 
\begin{align*}
&\Sigma_4=\sum_{|\mathfrak{q}|\le D_1}\max_{M\leq N}\max_{\xi(\fq)}\frac{1}{\phi(\fq)}\sum_{\chi\neq \chi_{o}(\fq)}\bar{\chi}(\fa)\sum_{\substack{A_1< |\xi|\leq A_2\\(\xi, \fq)=1}}f(\xi)\chi(\xi)\sum_{\substack{w\in A^0\left(\frac{M}{|\xi|^{1/2}}\right)\\ (w, \fq)=1}}\chi(w)g(w)\\
&+ \sum_{D_1<|\mathfrak{q}|\leq Q}\max_{M\leq N}\max_{\xi(\fq)}\frac{1}{\phi(\fq)}\sum_{\chi\neq \chi_{o}}\bar{\chi}(\fa)\sum_{\substack{A_1< |\xi|\leq A_2\\(\xi, \fq)=1}}f(\xi)\chi(\xi)\sum_{\substack{w\in A^0\left(\frac{M}{|\xi|^{1/2}}\right)\\ (w, \fq)=1}}\chi(w)g(w)=:\Sigma_5+\Sigma_6.
\end{align*}
where $D_1:=(\log N)^{B}.$

To calculate the sum $\Sigma_5$ we will use \eqref{SW condition number field} condition and \eqref{LOD on number field} directly for each arithmetic functions $f$ and $g$ and for calculating sum $\Sigma_6$ we will use Lemma \ref{lem:perron1} together with large sieve inequality for algebraic number field by extracting primitive characters from the sum over all non-principal characters $\pmod \fq$.
 
\subsection*{Estimation of $\Sigma_{5}$}
Using \eqref{SW condition number field} condition and \eqref{LOD on number field}
we have
\begin{align*}
\Sigma_5 &\leq \sum_{|\fq|\leq D_1}\frac{1}{\phi(q)}\sum_{\chi\neq \chi_{o}}|\sum_{A_1<|\xi|\leq A_2}f(\xi)\chi(\xi) \sum_{w\in A^0\left(\frac{N}{|\xi|^{1/2}}\right)}g(w)\chi(\xi)|\\
&+ \sum_{|\fq|\leq D_1}\frac{1}{\phi(q)}\sum_{\chi\neq \chi_{o}}|\sum_{\substack{A_1<|\xi|\leq A_2\\(\xi)|\fq}}f(\xi)\chi(\xi) \sum_{\substack{w\in A^0\left(\frac{N}{|\xi|^{1/2}}\right)\\(w)|\fq}}g(w)\chi(\xi)|\\
&\ll D_1 \sum_{A_1<|\xi|\leq A_2}\tau(\xi)^{C}\frac{|A^0(N)|}{|\xi|\log^{B'}\left(N/|\xi|^{1/2}\right)}+(\log N)^{d'}\ll \frac{|A^0(N)|}{\log^{B'} N}
\end{align*}
for some sufficiently large constant $B'$ depending on $B$ and $C.$

\subsection*{Estimation of $\Sigma_{6}$}
To calculate sum $\Sigma_6$ we have to calculate the following sum.
\begin{align*}
&\Sigma_6^{'}:=\\
&\sum_{D_1<|\mathfrak{q}|\leq Q}\max_{M\leq N}\max_{\xi(\fq)}\frac{1}{\phi(\fq)}\sum_{\chi\neq \chi_{o}}\bar{\chi}(\fa)\sum_{\substack{A_1< |\xi|\leq A_2\\(\xi, \fq)=1}}f(\xi)\chi(\xi)\sum_{\substack{w\in A^0\left(\frac{M}{|\xi|^{1/2}}\right)\\ (w, \fq)=1}}g\chi(w)\log^2 \left(\frac{|A^0(M)|}{|\xi||w|}\right).
\end{align*}
First we will show that $\Sigma_6^{'}=O\left(\frac{|A^0(N)|}{(\log N)^{D'}}\right)$ for some large $D^{'}>2$ and then by using partial summation formula we have, $\Sigma_6=O\left(\frac{|A^0(N)|}{(\log N)^{D'-2}}\right).$

Each character $\chi\neq \chi_{o}$ occurring here is induced by a primitive character $\chi^{*}(\fq_1)$ with $\fq_1|\fq.$
So $\Sigma_6^{'}$ can be written as $\Sigma_6^{'}=$
\begin{align*}
\sum_{D_1<|\mathfrak{q}|\leq Q}\max_{M\leq N}\max_{\fa(\fq)}\frac{1}{\phi(\fq)}\sum_{\fq_1|\fq}\sum_{\substack{\chi(q_1)\\ \left(a,\frac{\fq}{\fq_1}\right)=1}}^{*}\bar{\chi}(\fa)\sum_{\substack{A_1< |\xi|\leq A_2\\(\xi, \frac{\fq}{\fq_1})=1}}f(\xi)\chi(\xi)\sum_{\substack{w\in A^0\left(\frac{M}{|\xi|^{1/2}}\right)\\ (w, \frac{\fq}{\fq_1})=1}}g\chi(w)\log^2 \left(\frac{|A^0(M)|}{|\xi||w|}\right)
\end{align*}
Writing $\fq_1\fq_2=\fq$ and using Lemma \ref{lem:merten's theorem on number field} we have
\begin{align*}
\Sigma_6^{'}\ll \log N \max_{M\leq N}\max_{|\fq_2|\leq Q}I_{M,\fq_2}
\end{align*}
where
\begin{align*}
I_{M,\fq_2}:=\sum_{D_1<|\mathfrak{\fq_1}|\leq Q}\frac{1}{\phi(\fq_1)}\sum_{\substack{\chi(\fq_1)}}^{*}\Big|\sum_{\substack{A_1< |\xi|\leq A_2}}\tilde{f}(\xi)\chi(\xi)\sum_{\substack{w\in A^0\left(\frac{M}{|\xi|^{1/2}}\right)}}\tilde{g}(\xi)\chi(w)\log^2 \left(\frac{|A^0(M)|}{|\xi||w|}\right)\Big|
\end{align*}
and $\tilde{f}(\xi)=f(\xi), \tilde{g}(\xi)=g(\xi),$ if $(\xi, \fq_2)=1,$ $\tilde{f}(\xi)=\tilde{g}(\xi)=0$ otherwise.

By using  $|A^0\left(\frac{M}{|\xi|^{1/2}|}\right)|=(1+o(1))\frac{1}{|\xi|}|A^0(M)|$, Lemma \ref{lem:size of A(N)} and Lemma \ref{lem:perron1} to the innermost sum of $I_{M,\fq_2}$ we have,
\begin{align*}
&I_{M,\fq_2}=\frac{w_K}{\pi}\sum_{D_1<|\mathfrak{\fq_1}|\leq Q}\frac{1}{\phi(\fq_1)}\sum_{\substack{\chi(\fq_1)}}^{*}\Big|\int_{\sigma-iT}^{\sigma+iT}\tilde{f}(\chi, s)\tilde{g}(\chi, s)\frac{|A^0(M)|^s}{s^3}ds\Big|+O\left(\frac{|A^0(M)|^2}{T^2}\right)\\
&=\frac{w_K}{\pi}\sum_{D_1<|\mathfrak{\fq_1}|\leq Q}\frac{1}{\phi(\fq_1)}\sum_{\substack{\chi(\fq_1)}}^{*}|I_1+I_2+I_3|+O\left(\frac{|A^0(M)|^2}{T^2}\right)=:I_{M,\fq_2}^1+I_{M,\fq_2}^2+I_{M,\fq_2}^3+E,
\end{align*}
where 
\begin{align*}
&I_1:=\int_{\sigma-iT}^{\sigma+iT}\tilde{f}(\chi, s)\tilde{g}(\chi, s)\frac{|A^0(M)|^s}{s^3}ds,\quad
\tilde{f}(\chi, s)=\sum_{\substack{A_1< |\xi|\leq A_2}}\frac{\tilde{f}(\xi)\chi(\xi)}{|\xi|^s} \quad \text{and} \quad \\
& \tilde{g}(\chi, s)=\sum_{w\in \mathcal{O}_K}\frac{\tilde{g}(w)\chi(w)}{|w|^s}, \quad \sigma=1+\frac{1}{2\log N}.
\end{align*}
For the above choice of $\sigma$ it is easy to see that for some $Y<T$,
\[
\tilde{g}(\chi, s)\ll (1+|s|)Y^{1-\sigma}(\log Y)^{d'}\quad \text{and} \quad \tilde{g}(\chi, s)\ll A_2^{1-\sigma}(\log A_2)^{d'}+A_1^{1-\sigma}(\log A_1)^{d'}.
\]
Therefore integrals $I_2$ and $I_3$ are bounded above by
\[
\ll Y^{1-\sigma}(\log Y)^{d'}(\log N)^{d'}\frac{|A^0(N)|}{T}.
\]
Write, $I_1=\int_{\sigma-iT}^{\sigma+iT}\tilde{f}(\chi, s)(\tilde{g_1}(\chi, s)+\tilde{g_2}(\chi, s)+\tilde{g_3}(\chi, s))\frac{|A^0(M)|^s}{s^3}ds =: I_4+I_5+I_6$,\\
where
\begin{align*}
&\tilde{g}_1(\chi, s):=\sum_{w; |w|\leq Y}\frac{\tilde{g}(w)\chi(w)}{|w|^s}, \quad \tilde{g}_2(\chi, s):=\sum_{w; Y<|w|\leq T}\frac{\tilde{g}(w)\chi(w)}{|w|^s} \quad \text{and}\\
&\tilde{g}_3(\chi, s):=\sum_{w; |w|>T}\frac{\tilde{g}(w)\chi(w)}{|w|^s}.
\end{align*}
By using calculations of integrals $I_2$ and $I_3$ we can say that the integral $I_6$ also bonded above by
\[
\ll T^{1-\sigma}(\log T)^{d'}(\log N)^{d'}|A^0(N)|.
\]
By splitting intervals $[D_1, Q]$ and $[A_1, A_2]$ into Dyadic intervals we have,
\begin{align} \label{Dyadic inequality}
I_{M, \fq_2}^k\ll \sum_{j=0}^{J}\sum_{i=0}^{I}I_{M, \fq_2}^k(j, i),
\end{align}
where $2^{J}D_1<Q\leq 2^{J+1}D_1, 2^{I}A_1<A_2\leq 2^{I+1}A_1,$
\begin{align*}
&I_{M, \fq_2}^k(j, i)=\int_{\sigma-iT}^{\sigma+iT}S^k_{j, i}(s)|A^0(M)|^\sigma\frac{|ds|}{|s|^3}\quad \text{and}, \quad \tilde{f}_i(\chi, s)=\sum_{2^iA_1<|\xi|\leq 2^{i+1}A_1}\frac{\tilde{f}(\xi)\chi(\xi)}{|\xi|^s}\\
&S^k_{j, i}(s):=\sum_{2^jD_1<|\fq|\leq 2^{j+1}D_1}\frac{1}{\phi(\fq)}\sum_{\chi\neq \chi_{o}}\big|\tilde{f}_i(\chi, s)\tilde{g}_k(\chi, s)| \quad (k=1,2).
\end{align*}
Observe that,
\begin{align*}
&\int_{\sigma-iT}^{\sigma+iT}\tilde{f}_i(\chi, s)\tilde{g}_1(\chi, s)\frac{|A^0(M)|^s}{s^3}ds-\int_{1/2-iT}^{1/2+iT}\tilde{f}_i(\chi, s)\tilde{g}_1(\chi, s)\frac{|A^0(M)|^s}{s^3}ds\\
&=O\left(\frac{|A^0(M)|}{T^3}\left(\sum_{w; |w|\leq Y}\frac{\tau(w)^C}{|w|^{1/2}}\right)\left(\sum_{\xi; |\xi|\leq A_2}\frac{\tau(\xi)^C}{|\xi|^{1/2}}\right)\right)\\
&=O\left(\frac{|A^0(M)|^{3/2}\sqrt{Y}(\log Y)^{d'}(\log N)^{d'}}{T^3}\right).
\end{align*}
Therefore using above observations we have
\begin{align}\label{Dyadic inequality2}
I_{M, \fq_2}(j, i)&\ll \int_{1/2-iT}^{1/2+iT}S^1_{j, i}(s)\frac{|A^0(M)|^{1/2}}{|s|^3}|ds|+\int_{1/2-iT}^{1/2+iT}S^2_{j, i}(s)\frac{|A^0(M)|}{|s|^3}|ds|\\
& +O\left(\frac{|A^0(M)|^{2}\sqrt{Y}(\log Y)^{d'}(\log N)^{d'}}{T^2 (\log N)^{B'}}\right).
\end{align}
Now using Cauchy-Schwarz inequality on $\chi$ sum and then again on $\fq$ sum we have
\begin{align*}
S^k_{j, i}(s)\leq \left(\sum_{2^jD_1<|\fq|\leq 2^{j+1}D_1}\frac{1}{\phi(\fq)}\sum_{\chi\neq \chi_{o}}|\tilde{g}_k(\chi, s)|^2\right)^{1/2}\left(\sum_{2^jD_1<|\fq|\leq 2^{j+1}D_1}\frac{1}{\phi(\fq)}\sum_{\chi\neq \chi_{o}}|\tilde{f}_i(\chi, s)|^2\right)^{1/2}.
\end{align*}
Therefore, using Lemma \ref{lem:main large sieve inequality} we have for $s=1/2+it(-T\leq t\leq T)$,
\begin{align*}
S^1_{j, i}(s)&\ll \left(\left(2^{j+1}D_1+\frac{2^i A_1}{2^j D_1}\right)\sum_{\xi; |\xi|\leq A_2}\frac{\tau(\xi)^C}{|\xi|}\right)^{1/2}\left(\left(2^{j+1}D_1+\frac{Y}{2^j D_1}\right)\sum_{w; |w|\leq A_2}\frac{\tau(w)^C}{|w|}\right)^{1/2}\\
& \ll \left(Y+2^i A_1\right)^{1/2}(\log N)^{d'}.
\end{align*}
Let us Choose
\[
Y:= (2^jD_1)^2 \quad \text{and} \quad T:= e^{2(\log |A^0(N)|)^4}.
\]
Using the above choice of $Y$ and $T$ we have
\[
S^1_{j, i}(s)\ll |A^0(N)|^{1/2}(\log N)^{-B'}.
\]
Let $2^RY< T\leq 2^{R+1}Y$ and for $0\leq r\leq R$
\[
\tilde{g}_2^{(r)}(\chi, s)=\sum_{2^rY< |w|\leq 2^{r+1}Y}\frac{\tilde{g}(w)\chi(w)}{|w|^s}.
\]
Therefore we have,
\begin{align*}
R\ll (\log N)^2 \quad \text{and} \quad \tilde{g}_2(\chi, s)=\sum_{r=0}^{R}\tilde{g}_2^{(r)}(\chi, s).
\end{align*}
Now using Lemma \ref{lem:main large sieve inequality} we have for $s=\sigma+it(-T\leq t\leq T)$,
\begin{align*}
S^2_{j, i}(s)&\ll \max_{0\leq r\leq R}\left(\left(2^{j+1}D_1+\frac{2^i A_1}{2^j D_1}\right)\sum_{\xi; |\xi|\geq  2^iA_1}\frac{\tau(\xi)^C}{|\xi|^2}\right)^{1/2}\\
&\times\left(\left(2^{j+1}D_1+\frac{2^rY}{2^j D_1}\right)\sum_{w; |w|\geq 2^r Y}\frac{\tau(w)^C}{|w|^2}\right)^{1/2}(\log N)^2\\
& \ll \left(\frac{2^j D_1}{A_1}+\frac{1}{2^jD_1}\right)^{1/2}\left(\frac{2^j D_1}{Y}+\frac{1}{2^jD_1}\right)^{1/2}(\log N)^{d'+2}\ll \log^{-B'}N.
\end{align*}
Using the above choice of $Y, T$ and Substituting above estimations into \eqref{Dyadic inequality2}, \eqref{Dyadic inequality} we have,
\begin{align*}
\Sigma_6{'}\ll |A^0(N)|\log^{-B'}N.
\end{align*}
 \end{proof}
 
 \section{Proof of Corollary \ref{corollary2}}
 \begin{proof}
 We need the following lemma.
 \begin{lemma}[Lemma 2, \cite{HIN2}]\label{Hinz lemma}
 If $|\fq|\ll \log ^{D}N$ with a positive constant $D$, then we have for a non-principal character $\chi \pmod \fq$
 \[
 \sum_{w\in A^0(N)}\chi(w)\ll |A^0(N)|\exp\left(-c(\log N)^{1/2}\right),
 \]
 for some $c=c(D, K)>0.$
 \end{lemma}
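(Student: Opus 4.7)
The plan is to convert the character sum into a contour integral of the associated Hecke $L$-function and shift the contour into the classical zero-free region, falling back on Siegel's theorem to handle a possible real exceptional zero. Since $K$ has class number one, every ideal is principal and generates $w_K$ elements, so the series
\[
L(s,\chi):=\sum_{w\in\mathcal{O}_K}\frac{\chi(w)}{|w|^s},\qquad \mathrm{Re}\,s>1,
\]
coincides, up to the factor $w_K$, with the Hecke $L$-function attached to $\chi$. Applying Lemma~\ref{lem:perron1} with $\tilde f=\chi$ and $k=1$ gives
\[
\sum_{w\in A^0(N)}\chi(w)\log\!\left(\tfrac{N^2}{|w|}\right)
=\frac{w_K}{2\pi i}\int_{\sigma-iT}^{\sigma+iT}L(s,\chi)\,\frac{(N^2)^s}{s^{2}}\,ds+O\!\left(\tfrac{N^2}{T}\right),
\]
and the unweighted sum $\sum_{w\in A^0(N)}\chi(w)$ is recovered by partial summation at the cost of a harmless factor $\log N$.

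The next ingredient is the standard analytic input for $L(s,\chi)$: because $\chi$ is non-principal, $L(s,\chi)$ is entire, satisfies a functional equation of conductor $\asymp|\fq||D_K|$, and obeys the convexity bound $L(\sigma+it,\chi)\ll(|\fq|(|t|+2))^{C(1-\sigma)}\log^{O(1)}(|\fq|(|t|+2))$ in the critical strip. On top of this, one uses the classical de~la~Vall\'ee~Poussin zero-free region $\sigma\ge 1-c_1/\log(|\fq|(|t|+2))$, with at most one exception, a real simple Siegel zero $\beta_0<1$ arising only when $\chi$ is a real character. Combined with a Borel--Carath\'eodory/Hadamard factorization argument, these imply $L(s,\chi)\ll\log^{O(1)}(|\fq|(|t|+2))$ on any line $\sigma=1-c_3/\log(|\fq|(|t|+2))$ with $c_3<c_1$, provided any putative $\beta_0$ has been treated separately.

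The concrete shift then uses $T:=\exp(c_2\sqrt{\log N})$ and the new vertical line $\sigma_0:=1-c_3/\log(|\fq|T)$. Since $|\fq|\ll(\log N)^D$, we have $\log(|\fq|T)\asymp\sqrt{\log N}$ and therefore
\[
(N^2)^{\sigma_0}=|A^0(N)|\cdot\exp\!\bigl(-c_4\sqrt{\log N}\bigr).
\]
No residue is picked up because $L(s,\chi)$ is entire and $\sigma_0$ is well to the right of $s=0$; the two horizontal segments and the new vertical contour each contribute $\ll|A^0(N)|\exp(-c_5\sqrt{\log N})$ after using the $L$-function bound and the $1/|s|^2$ decay, and the Perron error $N^2/T$ is of the same quality.

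The main obstacle is the potential Siegel zero for a real character $\chi$. If $\beta_0$ lies between $\sigma_0$ and $1$, the zero-free-region bound on $L(s,\chi)$ is no longer uniformly available inside the rectangle, and one has to isolate the zero (for instance by writing $L(s,\chi)=(s-\beta_0)g(s)$ with $g$ controlled on the shifted contour) and bound $1-\beta_0$ from below. This is where I would invoke Siegel's theorem in its number-field form, due to Fogels, which yields $1-\beta_0\gg_{K,\varepsilon}|\fq|^{-\varepsilon}$ ineffectively. Choosing $\varepsilon>0$ small enough that $|\fq|^\varepsilon\le(\log N)^{D\varepsilon}=o(\sqrt{\log N})$ makes the residual contribution also $\ll|A^0(N)|\exp(-c\sqrt{\log N})$, and it is precisely this step that forces the constant $c$ in the conclusion to depend ineffectively on $D$ and $K$.
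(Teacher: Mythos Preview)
The paper does not prove this lemma; it is quoted verbatim from Hinz \cite[Lemma~2]{HIN2} and used as a black box. So there is no in-paper argument to compare against, and what you have written is a genuine proof sketch rather than a reproduction.

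Your contour-shifting strategy is sound and does yield the stated bound, but the entire discussion of the Siegel zero is misplaced. You are integrating $L(s,\chi)$ itself, not $-L'/L(s,\chi)$: since $\chi$ is non-principal, $L(s,\chi)$ is entire, so shifting the contour never picks up residues from zeros, and a zero of $L$ can only make $|L|$ \emph{smaller} on the shifted line, not larger. The upper bound you need, $L(\sigma_0+it,\chi)\ll\log(|\fq|T)$ on $\sigma_0=1-c_3/\log(|\fq|T)$, follows directly from the convexity (Phragm\'en--Lindel\"of) bound for Hecke $L$-functions, with no zero-free region input at all: take $\delta=1/\log(|\fq|T)$ in the interpolation between $\sigma=1+\delta$ and $\sigma=-\delta$. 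Consequently Fogels' version of Siegel's theorem is irrelevant here, and the constant $c=c(D,K)$ is in fact effective. The confusion seems to come from the Siegel--Walfisz theorem for \emph{primes}, where one integrates $-L'/L$ and zeros genuinely contribute; for the raw character sum $\sum_{w\in A^0(N)}\chi(w)$ that mechanism is absent. With the Siegel-zero paragraph removed and convexity substituted, your argument is complete.
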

 Using Lemma \ref{Hinz lemma} we can say that the function $f(w)=\mathds{1}_{w}$ satisfies \eqref{SW condition number field} condition. Therefore under hypothesis that prime have level of distribution $1/2$, Corollary follows from Theorem \ref{thm:Motohashi number field version} and Corollary \ref{corollary1}.
 \end{proof}
 
\end{document}